\documentclass[pdflatex,sn-apa]{sn-jnl}


\usepackage{graphicx}%
\usepackage{multirow}%
\usepackage{amsmath,amssymb,amsfonts}%
\usepackage{amsthm}%
\usepackage{mathrsfs}%
\usepackage[title]{appendix}%
\usepackage{xcolor}%
\usepackage{textcomp}%
\usepackage{manyfoot}%
\usepackage{booktabs}%
\usepackage{algorithm}%
\usepackage{algorithmicx}%
\usepackage{algpseudocode}%
\usepackage{listings}%


\theoremstyle{thmstyleone}%
\newtheorem{theorem}{Theorem}
%

\theoremstyle{thmstyletwo}%

\theoremstyle{thmstylethree}%

\raggedbottom

\begin{document}

\title[Didactic analysis of the modality of study of the real numbers ]{Didactic analysis of the modality of study of the real numbers in the Degree in Mathematics}


\author*[1]{\fnm{Jos\'e Gin\'es} \sur{Esp\'in}}\email{josegines.espin@upct.es}

\author[2]{\fnm{Josep} \sur{Gasc\'on}}\email{josep.gascon@uab.cat}

\author[3]{\fnm{Pedro} \sur{Nicol\'as}}\email{pedronz@um.es}

\affil*[1]{\orgname{Universidad Polit\'ecnica de Cartagena}, \country{Spain}}

\affil[2]{\orgname{Universitat Aut\`onoma de Barcelona}, \country{Spain}}

\affil[3]{\orgname{Universidad de Murcia}, \country{Spain}}


\abstract{To conduct a didactic analysis of the current modality of study of real numbers in the Bachelor's Degree in Mathematics, we rely on the notion of didactic paradigm and employ a novel research methodology within the framework of the Anthropological Theory of the Didactic. This methodology proposes that, in general, analysing a modality of study of a field of knowledge, currently implemented in a certain didactic institution, consists of modelling, through respective didactic paradigms, not only the current modality of study but also: (1) an alternative modality of study (a possible future) whose didactic ends include attempting to avoid certain current phenomena considered ``undesirable'' from the perspective of research; and (2) an antecedent modality of study (a possible past) that allows us to explain the emergence of current phenomena as a reaction to the existence of certain antecedent phenomena.}

\keywords{didactic analysis, didactic paradigm, modality of study, real numbers, Degree in Mathematics}



\maketitle

\section{Introduction: didactics as a science of study}\label{sec1}

From the perspective of the Anthropological Theory of the Didactic (ATD), we understand ``the didactic'' as relating to study in a broad sense, including in the notion of study not only the processes aimed at enabling its dissemination, but also those intended to facilitate its application or use in specific situations, the processes of creating and reorganizing knowledge, the processes of adaptation to different institutions, and those that facilitate access to new problems. In this sense, we consider didactics to be \textit{the science of study} \citep{eslabon, gascon1997} and of the modalities of study, including their impact on learning. If we restrict ourselves to the case of mathematical knowledge, we obtain a definition of the didactics of mathematics that generalizes the one proposed by Guy Brousseau in a lecture given in February 1993 at the Autonomous University of Barcelona:

\begin{quote}
	Didactics of mathematics is the science of the conditions for the creation and diffusion of mathematical pieces of knowledge. \citep[p. 33, our translation]{brousseau}.
\end{quote}

Given a \textit{field of knowledge}, $\mathcal{F}$, designated as the object of study\footnote{A didactic institution in which  a modality of study of the \textit{inquiry-based learning} type is implemented may also propose, as a subject of study, a \textit{question} formulated using the concepts and terms provided by a system, that is, by a part of the world (mathematical or extra-mathematical) about which it seeks to increase knowledge. The ultimate reference point for knowledge is, in all cases, a system or type of systems.} in a \textit{didactic institution}, $I$, we aim to investigate how a \textit{modality of study} of $\mathcal{F}$ in $I$ can be characterized. In short, we take the current (or possible) modalities of study in didactic institutions as a type of empirical system to be investigated and, to this end, we construct theoretical models of these modalities, which we call \textit{didactic paradigms}. We assume that didactics, like any empirical science, investigates certain systems indirectly, that is, through the intermediation of models constructed for that purpose.

In what follows, we will begin by analysing the basic notions related to didactic paradigms and describe in some detail the structure of \textit{praxeologies} \citep{praxeologies} because they play an important role in interpreting the epistemological model of a field of knowledge. Next, we will discuss the need to establish a \textit{reference didactic paradigm} from which to carry out the didactic analysis (\textit{economic} and \textit{ecological}\footnote{We understand the \textit{economy} of a system to be the coordination of the components (or subsystems) involved in its functioning \citep{moliner2007}. By the \textit{ecology} of a system, we mean the set of conditions that affect (or have affected) its change. These conditions explain, in part, its past evolution, its current behaviour (its economy), and its possible future developments. We will clarify these concepts in Section~\ref{sec3} and in in Sections~\ref{sec5} and~\ref{sec6} respectively.}) of the current modality of study.

To analyse the economy of the modality of study of real numbers, we will begin by summarizing a praxeological analysis of the \textit{current epistemological model} in the Degree in Mathematics\footnote{Throughout the paper, we will use \textit{Degree in Mathematics} to refer to the undergraduate \textit{Bachelor's Degree in Mathematics}.} (DM). This analysis, which will be carried out from the perspective of a certain \textit{reference epistemological model} (REM), will detect praxeological features that suggest the emergence of ``undesirable'' didactic phenomena (when interpreted from the perspective of ATD) and that can be formulated in terms of didactic means and didactic ends. At this point, the research problem can be explicitly stated.  

The explicit construction of the REM and the reference didactic paradigm based on it constitutes a prospective analysis of the current modality of study that initiates the ecological analysis and will allow us to formulate the hypothesis that a modality of study of real numbers governed by this paradigm would avoid the aforementioned undesirable didactic phenomena. To complete the ecological analysis, we construct a hypothetical \textit{antecedent didactic paradigm} that will supposedly help to `explain' the emergence and persistence of these phenomena. We will then be in a position to formulate new research problems based on other possible didactic analyses of the modality of study. We will conclude with a brief section of conclusions in which we emphasise the importance of the ecological dimension of the problem of paradigm shift and the fundamental role that didactic phenomena play in such changes.

\section{Theoretical framework: structure and dynamics of didactic paradigms}\label{sec2}

Didactic paradigms are theoretical models of the modalities of study of a field of knowledge in a didactic institution. When we speak of a \textit{field of knowledge}, we refer to what didactic institutions usually designate as the object of study, such as real numbers, Aristotelian ethics, or the simple pendulum. The notion of \textit{didactic institution} refers to a social institution with the stated aim of studying certain fields of knowledge. A \textit{social institution} consists of a set of constitutive rules, stated by convention, which establish: a series of institutional positions; permitted, mandatory and prohibited actions for each position; and rewards and punishments for certain actions \citep{searle2010}. Examples of social institutions are chess, language, marriage, law, employment and religion. Examples of didactic institutions are Primary Education, the Degree in Physics or Research in Mathematics. Social sciences seek to understand how social institutions work. In didactics, we want to understand how didactic institutions work.

\subsection{Structure of didactic paradigms}\label{subsect2.1}

Given a didactic institution, $I$, and a field of knowledge, $\mathcal{F}$, we seek to understand how the current modality of study of $\mathcal{F}$ in $I$ is organized and how it works. To this end, we shall employ the notion of didactic paradigm as a model. Specifically, we say that a \textit{didactic paradigm (for the study) of} $\mathcal{F}$ \textit{in} $I$ is a system consisting of three subsystems: $$\text{DP}_I(\mathcal{F})=[\text{EM}_I(\mathcal{F}), \text{DE}_I(\mathcal{F}), \text{DM}_I(\mathcal{F})],$$ where:

\begin{itemize}
	\item $\text{EM}_I(\mathcal{F})$ is the \textit{epistemological model} of $\mathcal{F}$ in $I$, that is, the answer provided by $I$ to the question: ``what does $\mathcal{F}$ consist of?'', and that we describe in terms of praxeologies (see Section~\ref{subsection2.3}),
	\item $\text{DE}_I(\mathcal{F})$ are the \textit{didactic ends}, that is, the answer to the question: ``what is the reason for studying $\mathcal{F}$ in $I$?'', 
	\item $\text{DM}_I(\mathcal{F})$ are the didactic means, that is, the actions (which may involve the use of certain objects) carried out to achieve the didactic ends. They constitute the answer to the question: ``how is $\mathcal{F}$ studied in $I$?''. 
\end{itemize}

We thus have a structural description of didactic paradigms, which can be intuitively interpreted as the set of ideas shared by a given study community that strongly shape the way its members act and interpret their work. Given the interdefinability and mutual dependence of the functions of its components within the overall system, the functional unity of a didactic paradigm (DP) becomes evident; consequently, it must be considered a complex system \citep{garcia2006}. We know that what is studied is inseparable from how it is studied, but we must not forget that both are inseparable from what it is studied for and, as we shall see below, from the phenomena to which the modality of study in question responds (the \textit{raison d'\^etre} for the emergence of this modality of study).

\subsection{Dynamics of didactic paradigms}\label{subsection2.2}

To understand how the paradigm shifts occur in a didactic institution and describe the \textit{\textit{dynamics}} of didactic paradigms, it will be necessary to employ the notion of \textit{\textit{didactic phenomenon}}\footnote{Without attempting to provide a definition of a didactic phenomenon, we shall simply state that it manifests itself through an observable set of events or processes related to study, and that these are classifiable, occur regularly under certain circumstances and across different institutions, and are surprising (from a given perspective), thereby requiring explanation.}. We shall show that didactic paradigms shifts are related with the occurrence or avoidance of certain didactic phenomena.

To interpret the current modality of study of the field $\mathcal{F}$ in $I$, didactic research constructs a DP for the study of $\mathcal{F}$ in $I$, which serves as the reference didactic paradigm, $\text{RDP}_I(\mathcal{F})$, and which models a hypothetical modality of  study of the field $\mathcal{F}$ in $I$:
$$ \text{RDP}_I(\mathcal{F})=\left[\text{REM}_I(\mathcal{F}), \text{RDE}_I(\mathcal{F}), \text{RDM}_I(\mathcal{F})\right].$$

The \textit{economic analysis} of the current modality of study, from the perspective of a $\text{RDP}_I(\mathcal{F})$, provides a model of the aforementioned modality. We refer to this model as \textit{current didactic paradigm (for the study) of} $\mathcal{F}$ \textit{in} $I$, $\text{CDP}_I(\mathcal{F})$, which, as is to be expected, depends on the $\text{RDP}_I(\mathcal{F})$:
$$\text{CDP}_I(\mathcal{F})=\left[\text{CEM}_I(\mathcal{F}),\text{CDE}_I(\mathcal{F}),\text{CDM}_I(\mathcal{F})\right].$$

The choice of a RDP is not arbitrary; it is conditioned by the means sought in the research. In fact, a ${\mathrm{RDP}}_I(\mathcal{F})$ embodies a scientific hypothesis constructed by research that can be formulated as follows: if a study community (meeting certain conditions) were to carry out a study process in $I$ of the field $\mathcal{F}$, represented by $\text{REM}_I(\mathcal{F})$, using the means $\text{RDM}_I(\mathcal{F})$,  then the ends $\text{RDE}_I(\mathcal{F})$ would be achieved, among which the most significant is that of avoiding the current didactic phenomena $\text{CD}\varphi_I(\mathcal{A})$ that occur when the study of the field $\mathcal{F}$ is governed by the $\text{CDP}_I(\mathcal{F})$.

Consequently, the construction of $\text{RDP}_I(\mathcal{F})$ aims at avoiding the phenomena $\text{CD}\varphi_I(\mathcal{F})$, as these are considered ``undesirable''\footnote{It is important to bear in mind: (1) that the consideration of ``undesirable'' didactic phenomena is always made from the perspective of a particular institution (which must be specified in each case); and (2) that not all didactic phenomena under study are necessarily ``undesirable'' from the perspective of didactic research. However, in general, the decision to study a phenomenon is primarily motivated by the need to \textit{explain} it, which already indicates that it contrasts with what would be expected from the standpoint of the assumptions adopted by the research. Moreover, when a phenomenon is chosen for study, it is usually with the aim of acting upon the didactic system in which it appears, so phenomena are typically selected that, from the perspective of the research, produce ``undesirable'' didactic effects, that is, effects \textit{incompatible with the didactic ends} of the DP assumed by the research.} from the research perspective. In short, $\text{RDP}_I(\mathcal{F})$ embodies a possible direction of change for $\text{CDP}_I(\mathcal{F})$, guided by the objective of avoiding the aforementioned phenomena.

The economic analysis of the current modality of study serves as a basis for analysing its ecology, that is, for investigating, first, the conditions that would be required to modify it in the direction indicated by the $\text{RDP}_I(\mathcal{F})$ (prospective analysis). And secondly (retrospective analysis), to answer the following question: how can we explain the emergence and permanence (over a certain period of time) of the current modality of study? We can assume that this emergence constitutes, in some cases, a response to a certain preceding modality of study that we model using a DP that we call the antecedent didactic paradigm, $\text{ADP}_I(\mathcal{F})$:

$$\text{ADP}_I(\mathcal{F})=\left[\text{AEM}_I(\mathcal{F}),\text{ADE}_I(\mathcal{F}),\text{ADM}_I(\mathcal{F})\right].$$

In this case, the current modality of study emerged (and persists) to avoid certain phenomena, $\text{AD}\varphi_I(\mathcal{F})$, which appeared in an antecedent modality of study and were (and continue to be) considered ``undesirable'' from the perspective of $\text{CDP}_I(\mathcal{F})$. However, making this assumption amounts to formulating a scientific hypothesis, which, as such, must be tested in each case, while simultaneously examining what other factors may influence the emergence of a new didactic paradigm. This clarifies why the reference and antecedent paradigms contribute to interpreting the current modality of study: the $\text{RDP}_I(\mathcal{F})$ serves to envisage a possible alternative modality of study, which, by contrast, indicates what the current modality of study is not, while $\text{ADP}_I(\mathcal{F})$ presents it as a response to a possible past modality of study.

Let us note that the pairs $(\text{ADP}_I(\mathcal{F}), \text{CDP}_I(\mathcal{F}))$ and $(\text{CDP}_I(\mathcal{F}), \text{RDP}_I(\mathcal{F}))$ are associated with scientific hypotheses according to which certain didactic phenomena are avoided through the implementation of certain modalities of study. The testing of these hypotheses differs in each case. Indeed, in the case of the hypothesis associated with the retrospective analysis, the researcher may rely on historical documents demonstrating the emergence of $\text{AD}\varphi_I(\mathcal{F})$, and may provide evidence that $\text{AD}\varphi_I(\mathcal{F})$ does not occur in the modality of study governed by $\text{CDP}_I(\mathcal{F})$. In the case of the hypothesis associated with the prospective analysis, the researcher should provide evidence of the existence of the $\text{CD}\varphi_I(\mathcal{F})$ and data supporting that these phenomena would be avoided in a modality of study governed by $\text{RDP}_I(\mathcal{F})$.

In summary, the didactic analysis of a modality of study reveals the dynamics of the didactic paradigms involved and the fundamental role played by didactic phenomena in such dynamics. The didactic analysis begins with the \textit{epistemological analysis} of the knowledge under study, which, in ATD, we call \textit{praxeological analysis} because it is made explicit in terms of praxeologies. Consequently, it will be useful to review the notion of \textit{praxeology}.

\subsection{Praxeologies as instruments for analysing epistemological models}\label{subsection2.3}

To detail the structure of the $\text{EM}_I(\mathcal{F})$ on which the $DP_I(\mathcal{F})$ are based, we shall use the idea of praxeology \citep{praxeologies}. A praxeology is a 4-tuple $\Pi = (T, \tau, \theta, \Theta)$, where:  
\begin{itemize}  
	\item $T$ is a family of task types,  
	\item $\tau$ is a family of techniques (typically described in terms of actions), devoted to carrying out the task types of $T$,  
	\item $\theta$ is the technology, or reasoned discourse on the technique, which explains why the techniques in $\tau$ work, and evaluates their scope, economy, and reliability, and  
	\item $\Theta$ is the theory, which plays the role of a second-level justificatory and interpretative discourse in relation to the knowledge of $\mathcal{F}$ in $I$.  
\end{itemize}

To adequately describe certain didactic phenomena, and even to adequately describe a process of study, we need to specify certain components of $\Theta$. To this end, we shall use the general description of the theory of a praxeology proposed in \citep{gasconciolas2024}, according to which a theory should include, at least:

\begin{itemize}
	\item An \textit{ontological} component, denoted by $\mathcal{O}$, which provides the \textit{language} $L$ used to speak about $\mathcal{F}$, the \textit{interpretation} $Int$ of the non-logical terms of $L$ (that is, terms other than $\lnot, \land, \vee, \rightarrow, \forall, \exists$), and a list of \textit{axioms}. Axiomas are \textit{postulates}, namely elementary statements expressed in terms of $L$ that are assumed to hold (interpreted according to $Int$) without any supporting argument.

	\item A \textit{nomological} component, denoted by $\mathcal{N}$, consisting of \textit{theorems}. Each theorem is a statement expressed in terms of $L$, and, unlike axioms, it is not a starting point in the study of $\mathcal{F}$, but rather the conclusion of a \textit{valid argument} with premises regarded as true according to $Int$ (either because they are themselves theorems or because they are axioms).
	
	\item An \textit{epistemological} component, denoted by $\mathcal{E}$, which establishes what types of arguments are valid to support the theorems.
\end{itemize}

Therefore, in this work we shall consider that the structure of a praxeology can be expressed as  
$\Pi = (T, \tau, \theta, \Theta)$, with $\Theta = (\mathcal{O}, \mathcal{N}, \mathcal{E})$ and $\mathcal{O} = (L, Int, \text{Axioms})$.

\subsection{Simultaneous construction of the current, reference and antecedent paradigm}\label{subsection2.4}

Before initiating the didactic analysis of the current modality of study of $\mathbb{R}$ in DM, it is necessary to clarify some general methodological issues, which we will apply here to the case of the real numbers. As indicated in Section~\ref{subsection2.2}, to interpret the current modality of study of $\mathbb{R}$ in $\text{DM}$, and to represent (model) it through $\text{CDP}_{\text{DM}}(\mathbb{R})$, it is necessary to fix a reference didactic paradigm, $\text{RDP}_{\text{DM}}(\mathbb{R})$, which serves as a reference system. This $\text{RDP}_{\text{DM}}(\mathbb{R})$ models a hypothetical modality of study that is postulated to constitute a way of avoiding certain phenomena, $\text{CD}\varphi _{\text{DM}}(\mathbb{R})$, that arise when $\text{CDP}_{\text{DM}}(\mathbb{R})$ governs the study process. To complete the analysis of the dynamics of $\text{CDP}_{\text{DM} }(\mathbb{R})$, it is necessary to ``explain'' why the aforementioned $\text{CD}\varphi_{\text{DM}}(\mathbb{R})$ occur and where they come from. One possible explanation consists in assuming the existence of an antecedent modality of study, governed by a certain antecedent didactic paradigm, $\text{ADP}_{\text{DM}}(\mathbb{R})$, which is constructed from the perspective of $\text{CDP}_{\text{DM}}(\mathbb{R})$. It is postulated that, when $\text{ADP}_{\text{DM}}(\mathbb{R})$ governed the study, certain phenomena, $\text{AD}\varphi_ {\text{DM}}(\mathbb{R})$, occurred, which were considered undesirable from the perspective of $\text{CDP}_{\text{DM}}(\mathbb{R})$, and that this paradigm was constructed precisely to avoid these phenomena.

We thus see that, on the one hand, in order to rigorously address the necessity of avoiding the phenomena $\text{CD}\varphi_{\text{DM}}(\mathbb{R})$ and to question why they occur and from where they originate, it is necessary to be acquainted with the main features of these phenomena and of the $\text{CDP}_{\text{DM}}(\mathbb{R})$ that governs the modality of study in which they arise. On the other hand, the construction of the $\text{CDP}_{\text{DM}}(\mathbb{R})$ requires, more or less explicitly, the establishment of a $\text{RDP}_{\text{DM}}(\mathbb{R})$. Moreover, the understanding of the $\text{CD}\varphi_{\text{DM}}(\mathbb{R})$ depends, in part, on the answers to why they occur and where they come from, that is, on the construction of an $\text{ADP}_{\text{DM}}(\mathbb{R})$. Ultimately, it turns out that, in scientific practice, the $\text{CDP}$, $\text{RDP}$, and $\text{ADP}$ must be constructed simultaneously, with the construction of each relying on the others. Nevertheless, due to the inevitable linearity of writing, we shall describe them successively, one after another. We shall begin by constructing the $\text{CDP}_{\text{DM}}(\mathbb{R})$ from the perspective of a certain $\text{RDP}_{\text{DM}}(\mathbb{R})$, which we will make explicit in Section~\ref{sec5}. Furthermore, in constructing the $\text{CDP}_{\text{DM}}(\mathbb{R})$, we shall also take into consideration certain features of the transition between the $\text{ADP}_{\text{DM}}(\mathbb{R})$ (which will be described in Section~\ref{sec6}) and the $\text{CDP}_{\text{DM}}(\mathbb{R})$.

\section{Economic analysis: construction of the current didactic paradigm}\label{sec3}

In this section, we shall analyse the economy of the current modality of study of $\mathbb{R}$ in DM from the perspective of a certain $\text{RDP}_{\text{DM}}(\mathbb{R})$. This analysis begins by investigating the functioning of the rules that govern the institutional mathematical organisation concerning $\mathbb{R}$ in $\text{DM}$, that is, by analysing and questioning the current epistemological model of $\mathbb{R}$ in $\text{DM}$. The economic analysis also includes the study of the forms (didactic means and ends) used to organise the study of $\mathbb{R}$ in $\text{DM}$. The results of this analysis will bring to light certain didactic phenomena that will allow the formulation of the research problem (see Section~\ref{sec4}) and that constitute its point of departure.

\subsection{Praxeological analysis of the current epistemological model}\label{subsect3.1}

To characterise the $\text{CEM}_{\text{DM}}(\mathbb{R})$, we began by examining the following textbooks, which are fairly standard and internationally shared: \citep{apostol}, \citep{vina}, \citep{ortega}, \citep{rudin} and \citep{spivak}. We found two ways of introducing $\mathbb{R}$ in $\text{DM}$: either axiomatically (by stating as an axiom that, up to isomorphisms, there exists a unique totally ordered field that satisfies the supremum property), or by defining it from $\mathbb{Q}$ using the well-known Dedekind or Cantor constructions. 

We shall now schematically summarise the construction of the $\text{CEM}_{\text{DM}}(\mathbb{R})$, which aims to reflect certain aspects of what is understood as ``real numbers'' in $\text{DM}$.

\begin{enumerate}
	\item The construction begins with the structure $(\mathbb{N}, +, \cdot, \le)$ in terms of sets. Subsequently, $(\mathbb{Z}, +, \cdot, \le)$ is constructed, where an integer is a certain equivalence class of pairs of natural numbers, and then $(\mathbb{Q}, +, \cdot, \le)$ is constructed, where a rational number is a certain equivalence class of pairs of integers. It is shown that there exist injective mappings $(\mathbb{N}, +, \cdot, \le) \rightarrow (\mathbb{Z}, +, \cdot, \le) \rightarrow (\mathbb{Q}, +, \cdot, \le)$ compatible with addition, multiplication, and order.  All these constructions are presented formally without any explicit reference to interpretation, and the properties of the operations are introduced axiomatically. For example, once $\mathbb{N}$ is constructed and addition has been introduced, multiplication is defined inductively as follows: for any natural numbers $n$ and $m$, $n \cdot 1 := n, \quad n \cdot (m+1) := n \cdot m + n$.  Notice that this definition appeals to the distributive property.
	
	\item It can be shown that $(\mathbb{Q}, +, \cdot, \le)$ is an Archimedean totally ordered field for which some Cauchy sequences are not convergent. This is the case, for example, of the sequence $(p_n/10^n)_{n \in \mathbb{N}}$, where $p_n = \max \{ a \in \mathbb{N} \mid a^2 < 2 \cdot 10^{2n} \}$, or of the sequence $\left(1 + \frac{1}{1!} + \frac{1}{2!} + \ldots + \frac{1}{n!}\right)_{n \in \mathbb{N}}$.
	
	\item The existence of an Archimedean totally ordered field in which every Cauchy sequence is convergent can be prove. This can be done, for example, as proposed by Cantor, by considering the set formed by certain equivalence classes of Cauchy sequences in $\left(\mathbb{Q},+,\cdot,\le\right)$, or by considering the set of Dedekind cuts of $\left(\mathbb{Q},+,\cdot,\le\right)$.
	
	\item The following theorem is proved: ``if two Archimedean and totally ordered fields satisfy that every Cauchy sequence is convergent, then these fields are isomorphic via an isomorphism of ordered fields''.
	
	\item In conclusion, it turns out that, up to an isomorphism of ordered fields, there exists a unique Archimedean totally ordered field in which every Cauchy sequence is convergent. This is called the field of real numbers, $(\mathbb{R}, +, \cdot, \le)$.
	
	\item Finally, it is proved that, excluding periodic decimal expansions ending with a tail of nines, every real number admits a unique decimal representation, and conversely, every decimal expansion represents a unique real number.
\end{enumerate}

When analysing this construction of the $\text{CEM}_{\text{DM}}(\mathbb{R})$ from the perspective of the $\text{REM}_{\text{DM}}(\mathbb{R})$ (which we shall make explicit in detail in Section~\ref{subsect5.1}), a first \textit{praxeological feature} can be observed, which may be described as follows: the operations in $\mathbb{R}$ are based on those in $\mathbb{Q}$, and in turn the operations in $\mathbb{Q}$ are defined axiomatically. Where do these axiomatic definitions, which provide the foundation for the entire construction, come from? Are they arbitrary? Could they have been different?

A brief analysis of these definitions shows that, far from being arbitrary, they are based on the interpretation of rational numbers as \textit{exact measures of quantities of magnitude commensurable} with the unit of measurement, or, equivalently, as \textit{solutions of linear equations}. This observation highlights a paradox in the construction of the $\text{CEM}_{\text{DM}}(\mathbb{R})$: on the one hand, it disregards the relationship between real numbers and the measurement of quantities of magnitude, but, on the other hand, implicitly, it is grounded in this relationship. This \textit{separation between real numbers and the measurement of magnitudes} originates in the near disappearance of the problem of measurement in the school curriculum. 

A \textit{second praxeological} feature visible in the $\text{CEM}_{\text{DM}}(\mathbb{R})$ is the \textit{``artificial'' nature of the constructions} of $\mathbb{R}$. These constructions, whether they concern the axiomatic definition or the well-known Dedekind and Cantor constructions, conceal their raison d'\^etre by always avoiding the use of geometric intuitions in the arguments.

This avoidance constitutes an essential aspect of the \textit{epistemological component} of the praxeologies that form part of the $\text{CEM}_{\text{DM}}(\mathbb{R})$ and, as we shall see in Section~\ref{sec6}, represents a radical shift compared to the antecedent paradigm, which was grounded in geometric intuition. In particular, this shift affects the meaning of operations between real numbers. When these numbers were quantities of magnitude, there was no doubt that certain operations could be performed with them. For example, the product $\sqrt{2} \cdot \sqrt{3} = \sqrt{6}$ could be carried out because, for instance, it was clear that a rectangle with dimensions $\sqrt{2}$ and $\sqrt{3}$ must have an area, and it was also clear that $$(\sqrt{2} \cdot \sqrt{3})^2 = (\sqrt{2} \cdot \sqrt{3}) \cdot (\sqrt{2} \cdot \sqrt{3}) = \sqrt{2} \cdot \sqrt{2} \cdot \sqrt{3} \cdot \sqrt{3} = 2 \cdot 3 = 6.$$

However, with the new constructions of the real numbers, how can one even prove that the operation $\sqrt{2} \cdot \sqrt{3}$ can be carried out? These are also the types of tasks to which space is devoted in the textbooks of the new $\text{CDP}_{\text{DM}}(\mathbb{R})$. Dedekind  himself, referring to the definition of operations in $\mathbb{R}$, writes:

\begin{quote}
	Just as addition is defined, so can the other operations of the so-called elementary arithmetic be defined, $\ldots$ and in this way we arrive at real proofs of theorems (as, e.g., $\sqrt2\cdot\sqrt3=\sqrt6$), which to the best of my knowledge have never been established before \citep[p. 11]{dedekind1963}.
\end{quote}

This shift in the \textit{epistemological component} had consequences for the \textit{ontological component}, not only regarding to the ontology of the real numbers (what kind of objects are real numbers?), but also for other notions in mathematical analysis, which came to be interpreted without reference to magnitudes. Thanks especially to the work of Weierstrass, it became possible to adopt a purely arithmetical-algebraic language that allowed the basic notions of analysis to be presented in terms of operations and inequalities (in terms of $\varepsilon$-$\delta$). Thus, we encounter a $\text{CEM}_{\text{DM}}(\mathbb{R})$ that is entirely different from its predecessor. These differences are evident when comparing nineteenth-century textbooks with those of the early twentieth century. For example, \cite{kline1990} indicates that it is possible to confirm the change of epistemological model in the study of mathematical analysis (and in particular of the real numbers) by comparing the first with the second and third editions of Camille Jordan's \textit{Cours d'analyse}. By contrasting the first volume of the first edition \citep{jordan1882} with that of the second edition \citep{jordan1893}\footnote{It is possible to consult the digitised versions of the three editions of Jordan's \textit{Cours d'analyse} in the digital library of the French National Library.}, we can confirm that the differences are indeed remarkable. The treatment of the real numbers in the first edition still relies essentially on geometric notions, and although definitions of continuity and differentiability comparable to modern ones are introduced, a rigorous and deductive study of their properties is not presented. In the second edition, Dedekind cuts are introduced in the first chapter, with significant consequences for the development of the notions of limit, continuity, and differentiability: fundamental properties of the real numbers are presented and proven (for example, in the first chapter, it is shown, without appealing to geometric intuition, that every bounded monotone increasing sequence of real numbers is convergent).

A similar process occurred in the introduction of modern analysis by Zoel Garc\'ia de Galdeano\footnote{The digitised versions of Garc\'ia de Galdeano's works can be accessed in the digital library of the Spanish National Library.}  in Spain between the late nineteenth and early twentieth centuries \citep{ausejo2010}. In his \textit{Tratado de \'Algebra} of 1884, Garc\'ia de Galdeano introduced some analytical notions based on geometric intuition, while in his \textit{Tratado de An\'alisis Matem\'atico} of 1904, the arithmetization of analysis was presented through the introduction of Dedekind cuts.

\subsection{Current didactic ends, means and phenomena}\label{subsect3.2}

The two praxeological features identified in the $\text{CEM}_{\text{DM}}(\mathbb{R})$ (the separation between numbers and the measurement of magnitudes, on the one hand, and the artificial character of the constructions of the real numbers and the consequent obscuring of their  raison d'\^etree, on the other) together with the new epistemological and ontological components, give rise to didactic phenomena that are ``undesirable'' from the perspective of the ATD.

Indeed, if the study of the real numbers in DM is governed by the $\text{CDP}_{\text{DM}}(\mathbb{R})$, then the didactic means exhibit a tendency towards authoritarianism: students are expected to possess sufficient ``mathematical maturity'' to accept deductive arguments without any ``unnatural'' concern for the problematic background that lies behind them and that would allow the aforementioned constructions of the real numbers to be made meaningful. This authoritarianism of the didactic means is not confined to the study of real numbers in DM; it is in fact a phenomenon that manifests itself at the disciplinary level \citep{lakatos1978b}. The new ``authoritarian'' epistemological component (only axiomatic definitions and deductive arguments whose validity does not depend on our intuitions about certain magnitudes), together with the corresponding ontological component, requires explicit presentation. Thus, for example, in the Degree in Mathematics at the University of Murcia, a course entitled \textit{Introducci\'on al m\'etodo matem\'atico} has been included, in which, among other topics, the distinction between natural and formal language, first-order logic, common strategies for proof in mathematics, notions concerning the axiomatic development of mathematics, models, paradoxes, and the limitations and variations of axiomatics are studied. The existence of this course is explained by the fact that the epistemological model of mathematics in Secondary Education is very distant from the $\text{CEM}_{\text{DM}}(\mathbb{R})$, and highlights a neglect of the mathematical problematic in favour of focusing on the ``form'' in which responses to that problematic are presented.

On the other hand, the study of $\mathbb{R}$ aims to ensure that students acquire the necessary foundations to avoid the difficulties encountered by geometric intuition and, above all, to advance in their future studies, to support rigorously the notions of limit, continuity, differentiation, and integration \citep{edwards1979}. This objective leads to a neglect of the role played by $\mathbb{R}$ in the mathematical tasks that students perform ``currently'' \citep{licera2019}. For this reason, we say that the \textit{didactic ends} of the study of $\mathbb{R}$ in DM \textit{are of a propaedeutic nature}, which constitutes another of the current didactic phenomena.

It is worth noting that in the current modality of study the aforementioned difficulties arising from the inadequate use of geometric intuition are avoided, but not explicitly mentioned, so that the question of why real numbers are constructed in such an artificial manner remains unanswered. In fact, the constructions are clearly motivated by their usefulness in proving the existence of certain suprema (the construction of Dedekind cuts) and the convergence of certain sequences of rational numbers (Cantor's construction, which presents the real numbers as equivalence classes of Cauchy sequences over $\mathbb{Q}$). Consistently, many types of tasks in the $\text{CDP}_{\text{DM}}(\mathbb{R})$ are directly concerned with these matters. Examples of task types would be:

\begin{itemize}  
	\item proving that certain subsets of $\mathbb{Q}$, although bounded above, have no supremum in $\mathbb{Q}$. For example, $\{x \in \mathbb{Q} \mid x > 0, \, x^2 < 2\}$;  
	\item proving that the sequence $\left\{\tfrac{p_n}{10^n}\right\}_{n \in \mathbb{N}}$, where $\tfrac{p_n}{10^n}$ is the largest rational number with denominator $10^n$ whose square is less than $2$, is Cauchy but has no limit in $\mathbb{Q}$;  
	\item proving that if $A \subseteq \mathbb{Q}$ has a supremum, then there exists a sequence $(a_n)_{n \in \mathbb{N}}$ with $a_n \in A$ and $\lim_{n \to \infty} a_n = \sup A$;  
	\item proving that if $(a_n)_{n \in \mathbb{N}}$ is a bounded above and monotone increasing sequence of rational numbers, then it is convergent and its limit is $l = \sup \{a_n\}_{n \in \mathbb{N}}$;  
	\item proving that the sequence with general term $a_n = 1 + 1/1 + 1/2 + \ldots + 1/n$ satisfies that, given $\varepsilon > 0$, there exists $n_0 \in \mathbb{N}$ such that $\lvert a_{n+1} - a_n \rvert < \varepsilon$ for $n > n_0$, but that it is nevertheless not a Cauchy sequence\footnote{The sequence $\left(b_n\right)_n$ with $b_n=a_n-\log{\left(n\right)}=\sum_{m=1}^{n}\frac{1}{m}-\log{\left(n\right)}$ is a Cauchy sequence (so convergent). Curiously enough, it is still not known whether its limit $\gamma$ (the so-called \textit{Euler-Mascheroni constant}) is rational or irrational.}.  
\end{itemize}

Yet it remains unclear why it is convenient to have a set of numbers in which every non-empty bounded-above set has a supremum, or in which the convergent sequences are precisely the Cauchy sequences.

The aforementioned current phenomena are, to a large extent, a consequence of the \textit{arithmetization of analysis}, which in turn is closely related to the formalisation of mathematics, consisting in the exclusive use of axiomatic definitions and purely deductive arguments that avoid relying on intuition about the behaviour of the objects involved.

\section{Research problem}\label{sec4}

The didactic phenomena described above (\textit{the authoritarian nature of the didactic means} and \textit{the propaedeutic character of the didactic ends}) are interrelated and regarded as undesirable from the perspective of the ATD. They constitute the starting point of our research. In order to explain them, and eventually to avoid them, we pose the following \textit{research problem}, which we formulate in the form of questions:

Why are the current didactic means for the study of $\mathbb{R}$ in DM authoritarian, and the current didactic ends are essentially propaedeutic in nature? Can it be justified that these $\text{CD}\varphi_{\text{DM}}(\mathbb{R})$ occur as an unforeseen and unintended consequence (a latent function) of the implementation of the $\text{CDP}_{\text{DM}}(\mathbb{R})$? How can a new modality of study of $\mathbb{R}$ in DM, governed by a paradigm consistent with that of mathematical modelling, be designed, implemented, and evaluated so as to avoid these $\text{CD}\varphi_{\text{DM}}(\mathbb{R})$? What conditions would be required and what constraints might hinder the transition from the $\text{CDP}_{\text{DM}}(\mathbb{R})$ towards this new paradigm, which we call $\text{RDP}_{\text{DM}}(\mathbb{R})$? Is it possible to determine the existence of an antecedent didactic paradigm, $\text{ADP}_{\text{DM}}(\mathbb{R})$, and antecedent didactic phenomena, $\text{AD}\varphi_{\text{DM}}(\mathbb{R})$, regarded as undesirable from the perspective of the $\text{CDP}_{\text{DM}}(\mathbb{R})$, which would allow the historical emergence of the $\text{CDP}_{\text{DM}}(\mathbb{R})$ to be interpreted as a way of avoiding such phenomena (a manifest function)\footnote{The avoidance of the $\text{AD}\varphi_{\text{DM}}(\mathbb{R})$ could be regarded as part of the manifest functions, that is, the objective and intended consequences, of the implementation of the $\text{CDP}_{\text{DM}}(\mathbb{R})$, whereas the occurrence of the $\text{CD}\varphi_{\text{DM}}(\mathbb{R})$ could be considered as latent functions, that is, the unintended and unforeseen consequences, of the implementation of the $\text{CDP}_{\text{DM}}(\mathbb{R})$. The distinction between manifest and latent functions was studied in detail by \cite{merton1949}.}? What conditions have favoured, and what constraints have hindered, the historical transition from the $\text{ADP}_{\text{DM}}(\mathbb{R})$ to the $\text{CDP}_{\text{DM}}(\mathbb{R})$?

To address these questions, we shall carry out an ecological analysis (both retrospective and prospective) of the current modality of study of $\mathbb{R}$ in DM. This is a diachronic analysis insofar as it integrates: (a) questions concerning the conditions that made possible the emergence of the modality of study of $\mathbb{R}$ in DM and that sustain its current validity; and (b) questions concerning the conditions required (and the constraints that hinder) possible changes in this modality of study in a given direction. 

We postulate that the results of this analysis, based on the economic analysis described in Section~\ref{sec3} and, in particular, on the praxeological analysis (see Section~\ref{subsect3.1}), will provide elements of response to the questions that form part of the research problem.

\section{Prospective analysis: construction of a reference didactic paradigm}\label{sec5}

The authoritarian nature of the didactic means and the essentially propaedeutic character of the didactic ends, visible when the study of $\mathbb{R}$ in DM is governed by the $\text{CDP}_{\text{DM}}(\mathbb{R})$, constitute the current didactic phenomena, $\text{CD}\varphi_{\text{DM}}(\mathbb{R})$. In this section, we describe a possible modality of study of $\mathbb{R}$  in DM which, we postulate, would allow these phenomena to be avoided. To this end, we shall construct a $\text{RDP}_{\text{DM}}(\mathbb{R})\ =\ [\text{REM}_{\text{DM}}(\mathbb{R}),\ \text{RDE}_{\text{DM}}(\mathbb{R}),\ \text{RDM}_{\text{DM}}(\mathbb{R})]$ that governs this modality of study and is based on the $\text{REM}_{\text{DM}}(\mathbb{R})$ that we have implicitly used to analyse the $\text{CEM}_{\text{DM}}(\mathbb{R})$ in Section~\ref{subsect3.1}, and which we shall construct explicitly below.

\subsection{Reference epistemological model}\label{subsect5.1}

To achieve the $\text{RDE}_{\text{DM}}(\mathbb{R})$, which will be made explicit in Section\ref{subsect5.2}, we require an epistemological model alternative to the current one, which begins by connecting real numbers with the measurement of magnitudes, but ultimately presents them in a way compatible with the formalist epistemological component of the $\text{CEM}_{\text{DM}}(\mathbb{R})$, according to which only axiomatic definitions and deductive arguments are used (although in our epistemological model these definitions are motivated by considerations about magnitudes).

To understand our $\text{REM}_{\text{DM}}(\mathbb{R})$, we will also outline the epistemological models from which it inherits, and which it presupposes, namely: $\text{REM}_{\text{DM}}(\mathbb{M})$, $\text{REM}_{\text{DM}}(\mathbb{N})$, $\text{REM}_{\text{DM}}(\mathbb{Q}^+)$ and $\text{REM}_{\text{DM}}(\mathbb{R}^+)$, where $\mathbb{M}$ denotes the study of magnitudes, and $\mathbb{N}$, $\mathbb{Q}^+$, and $\mathbb{R}^+$ denote the study of the corresponding metrizations (using non-negative numbers) of certain magnitudes.

In the $\text{REM}_{\text{DM}}(\mathbb{M})$ we begin by observing that a magnitude can be modelled through an \textit{extensive comparative system}\footnote{In \citep{diez2008} this is called a \textit{metric}, but we follow the terminology of \citep{mosterin2016}.}, which incorporates into the ontological component a non-empty set $A$, together with binary relations, $\sim$ and $\prec$, on $A$, and a binary law $\circ$ of internal composition on $A$, satisfying the following axioms:

\begin{itemize}
	\item $\sim$ is an equivalence relation,
	\item For all $x,y,z \in A$, $(x \prec y) \wedge (y \prec z) \Rightarrow (x \prec z)$,
	\item For all $x,y \in A$, $(x \prec y) \Rightarrow \lnot (y \prec x)$,
	\item For all $x,y \in A$, $(x \prec y) \vee (y \prec x) \vee (x \sim y)$,
	\item For all $x,y,z \in A$, $x \circ (y \circ z) \sim (x \circ y) \circ z$,
	\item For all $x,y \in A$, $x \circ y \sim y \circ x$,
	\item For all $x,y,z \in A$, $x \prec y \Leftrightarrow x \circ z \prec y \circ z$,
	\item For all $x,y \in A$, $x \neq y \Rightarrow x \prec x \circ y$,
	\item For all $x,y \in A$, there exists a natural number $n$ such that $x \prec n y$ where $$n y :=\overbrace{y \circ \cdots \circ y}^{n\text{ times}}.$$
\end{itemize}

Our interpretation is as follows: the elements of $A$ are the objects being compared with respect to a magnitude. The interpretation we give to the statements $x \prec y$ and $x \sim y$ depends on a\textit{ standard technique of direct comparison} associated with the magnitude in question, and $x \circ y$ is an object resulting from combining (in a manner depending on the particular magnitude) the objects $x$ and $y$. For a more precise interpretation, a specific magnitude must be fixed. For instance, if the magnitude is cardinality, the elements of $A$ would be finite sets, the comparison would be carried out using the technique of direct pairing (matching elements of $x$ with elements of $y$), and the combination of two sets would be performed via the union of two disjoint sets. If the magnitude is length, the elements of $A$ would be segments (idealized objects), the comparison would be carried out using the technique of side-by-side juxtaposition (placing one segment next to the other, aligning one end to see if either extends beyond), and the combination of two segments would be done via concatenation. If the magnitude is area, the elements of $A$ would be surfaces (idealized objects), the comparison would be carried out using the technique of superposition (to see whether one surface fits within the other), and the combination via juxtaposition of surfaces.

In $\text{REM}_{\text{DM}}(\mathbb{N})$, we begin by observing that, in practice, it is often not possible to apply the technique of direct comparison, so an \textit{indirect comparison} must be made. In the case of the magnitude of cardinality, this leads to the creation of the natural numbers \citep{sierra2006,garcia2015}. 

We then incorporate into our ontological component the familiar objects and relations $(\mathbb{N}, =, <, +)$, together with a mapping $\mu: A \rightarrow \mathbb{N}$ such that:
\begin{align}
	x \sim y &\Leftrightarrow \mu(x) = \mu(y), \label{propmed1}\\
	x \prec y &\Leftrightarrow \mu(x) < \mu(y), \label{propmed2}\\
	\mu(x \circ y) &= \mu(x) + \mu(y). \label{propmed3}
\end{align}

We call \emph{measurement} the process of assigning a number $\mu(x)$ to a set $x$, and, consistently, we call $\mu$ a \emph{measure mapping}. Observe that properties \eqref{propmed1} and \eqref{propmed2} of the measure mapping allow us to compare indirectly: if we want to compare $x$ and $y$, we do not need to apply the standard comparison technique; it suffices to measure them, obtaining the corresponding numbers $\mu(x)$ and $\mu(y)$, compare these numbers, and then conclude whether $x \prec y$, $y \prec x$, or $x \sim y$. 

The justification that $\text{REM}_{\text{DM}}(\mathbb{N})$ provides for the natural numbers, as tools for comparing finite sets with respect to the magnitude of cardinality, supports the usual construction of natural numbers in Set Theory: $0 := \emptyset$, $1 := \{\emptyset\}$, $2 := \{\emptyset,\{\emptyset\}\}, \ldots$

Sometimes, measuring a set directly is not easy, and we must perform an \textit{indirect measurement}. For this purpose, the \textit{operations} are useful. For example, \textit{addition} is used to measure a set that is given as the disjoint union of two sets with known measures. \textit{Multiplication} is used to measure a set that is given as the disjoint union of several equipotent sets with known measures. The interpretation of these operations justifies (through non-deductive arguments) the associative and commutative properties for addition and multiplication, as well as the distributive property. This provides a justification for the usual definition of the sum and product of natural numbers in Set Theory:
\begin{itemize}
	\item Addition: $m+0:=m$,\ \ $m+\left(n+1\right):=\left(m+n\right)+1$.
	\item Multiplication: $m\cdot1:=m$,\ \ $m\cdot\left(n+1\right)=m\cdot n+m$.
\end{itemize}

In $\text{REM}_{\text{DM}}(\mathbb{Q}^+)$, it is also frequently impossible, in practice, to apply the technique of direct comparison for magnitudes such as length or area, which leads to the creation of the positive rational numbers to enable indirect comparison. Natural numbers are not sufficient because, in some cases, an object is not equivalent to an integer multiple of the unit of measure, but rather to a certain number of times the \textit{nth} part of the unit of measure (or, equivalently, the \textit{nth} part of a certain number of repetitions of the unit of measure). With this interpretation, one can provide a (non-deductive) argument showing that two fractions of the unit of measure, $\frac{a}{b}$ and $\frac{c}{d}$, represent the same magnitude if and only if $a\cdot d=b\cdot c$. This justifies the usual construction of the rational numbers in Set Theory. We then incorporate into our ontological component the well-known objects and relations, $\left(\mathbb{Q}^+,=,<,+\right)$, and a mapping $\mu:A\rightarrow\mathbb{Q}^+$ that is intended to satisfy properties \eqref{propmed1}, \eqref{propmed2} and \eqref{propmed3} above, so as to allow \textit{indirect comparison}, just as occurred with the natural numbers and the magnitude of cardinality.

As was the case in $\text{REM}_{\text{DM}}\ (\mathbb{N})$, direct measurement is sometimes difficult, and it becomes necessary to use the operations to perform indirect measurement. For example, addition would serve to measure a segment given as the concatenation of two segments of known length. Multiplication would serve to measure the area of a rectangle with sides of known lengths, or to measure a length or area given as a fraction of a fraction of a given length or area. With these interpretations of the operations, the usual definitions of addition and multiplication in Set Theory are justified (through non-deductive arguments).

However, unlike what happens with the natural numbers, it could initially be questioned whether, in some cases, when measuring the length of a segment or the area of a surface, rational numbers provide only approximate measures rather than the exact measure. In such a case, $\mu$ would be defined only up to a certain degree of precision and would satisfy properties \eqref{propmed1}-\eqref{propmed3} only within that degree of precision. Indeed, given a unit of measurement u and an object $x\in A$, it could happen that there exist no natural numbers $a,b\in\mathbb{N}$ such that $b\cdot x\sim a\cdot u$.

In $\text{REM}_{\text{DM}}(\mathbb{Q}^+)$, it will be seen that the non-negative rational numbers correspond exactly to the non-negative periodic decimal expansions. Using the Pythagorean theorem, it can be shown that the diagonal of a square with side $1$ is a number whose square is $2$, and there is no rational number or non-negative periodic decimal expansion whose square is $2$ (see \citep{klazar2009}, where this fact is proved using only decimal expansions). But what if, instead of using only non-negative periodic decimal expansions, we use all non-negative decimal expansions?

We now begin the outline of $\text{REM}_{\text{DM}}(\mathbb{R}^+)$, related to \citep{licera2017}. Let $\mathbb{R}^+$ be the set of all non-negative decimal expansions, which can be viewed as pairs $(a_0,(a_n)_{n\geq1})$, where $a_0$ is a natural number and $a_n\in\mathcal{D}:=\{0,1,\ldots,9\}$. The measurement function is then defined as follows. We fix a segment, $u_0$, which is taken as the unit of measure, and for each natural number $n\geq1$ we take a segment $u_n$ such that ${10}^n\cdot u_n=u_0$. Then the measurement function $\mu:A\rightarrow\mathbb{R}^+$, $x\mapsto\mu\left(x\right)=a_0.a_1a_2a_3\ldots$, is computed as follows:
\begin{itemize}
	\item $a_0$ is the greatest natural number such that $\lnot\left(x \prec a_0u\right)$,
	\item $a_1$ is the greatest number at $\mathcal{D}={0,1,\ldots,9}$ such that $\lnot\left(x\prec a_0u\circ a_1u_1\right)$,
	\item $a_2$ is the greatest number at $\mathcal{D}$ such that $\lnot\left(x\prec a_0u\circ a_1u_1\circ a_2u_2\right)$,
	\item $\ldots$
\end{itemize}

It seems clear that by means of this process, and after a finite or countable number of steps, we would obtain a decimal expansion (periodic or not) that provides the measure of any segment, since it approximates its length beyond any desired degree of precision. For instance, by applying this procedure we see that the first digits of the measure of the diagonal of a unit square would be $1.4142\ldots$ Indeed, $1$ is the greatest natural number whose square does not exceed $2$; $1.41$ is the greatest decimal expansion with only one decimal digit whose square does not exceed $2$; $1.41$ is the greatest decimal expansion with two decimal digits whose square does not exceed $2$; and so on. In this way, we obtain an increasing sequence bounded above, $1,\ 1.4,\ 1.41,\ 1.414,\ 1.4142,\ \ldots$, which can be seen as a series of truncations of a decimal expansion whose full set of digits cannot be determined in a finite number of steps, but of which any given digit can be determined in finitely many steps. The square of the corresponding non-periodic decimal should then be $2$. But is it really so? More generally, can we operate with non-periodic decimal expansions?

Given two decimal expansions, we can construct sequences of finite decimal expansions by operating with their truncated forms of increasing length. In this way, as in the previous example, we obtain an increasing and bounded sequence of finite decimals, which determines an element of $\mathbb{R}^+$. To simplify notation, let $a = (a_0,(a_n)_{n\ \geq\ 1})$ be a decimal expansion. For each $m\ \geq\ 1$, we denote by $a_{(m)}$ the decimal expansion obtained by truncating $a$ at the $m$-th decimal place, that is, $a_{(m)}\ =\ (a_0,\ a_1,\ a_2,\ \ldots,\ a_m,\ 0,\ 0,\ \ldots)$.This decimal expansion is, of course, identified with the rational number $\sum_{n=0}^m\ \frac{a_n}{10^n}$.

It can be shown that $\mathbb{R}^+$, equipped with the usual lexicographic order\footnote{We shall assume that decimal expansions ending in a tail of nines are identified, in the usual way, with a finite decimal expansion; for example, $1.4999\ldots = 1.5$. Alternatively, one could begin the whole construction by excluding decimal expansions that end in a tail of nines and then later adjoin them and prove equalities of the kind above.}, satisfies the supremum property; that is, for every non-empty subset $S\subseteq\mathbb{R}^+$ that is bounded above there exists $\alpha=\sup(S)\in\mathbb{R}^+$ which is the least upper bound of $S$. It is important to observe that seeking completeness (expressed by the supremum property) now appears natural, since it gives a mathematical reinforcement of the idea that, even when a countably infinite number of steps is required, the measuring process terminates in the sense that it determines uniquely a decimal expansion. We shall now show how to prove the supremum property in $\mathbb{R}^+$.

\begin{theorem}[Supremum property] Every non-empty and bounded above subset of $\mathbb{R}^+$ has a least upper bound in $\mathbb{R}^+$.
\end{theorem}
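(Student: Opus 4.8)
The plan is to build the least upper bound \emph{digit by digit} through a greedy procedure that mirrors the very measurement process motivating the definition of $\mathbb{R}^+$, and then to verify separately that the resulting decimal expansion is an upper bound and that it is the least one. Fix a non-empty $S\subseteq\mathbb{R}^+$ bounded above by some $M=(M_0,(M_n)_{n\geq 1})$, and set $S_0:=S$. First I would define the integer part of the candidate supremum as $\alpha_0:=\max\{a_0:(a_0,(a_n))\in S\}$; this maximum exists because the set of integer parts is a non-empty subset of $\{0,1,\ldots,M_0\}$. Then I would define the decimal digits inductively: having fixed $\alpha_0,\ldots,\alpha_{k-1}$, put
$$\alpha_k:=\max\{a_k:(a_0,(a_n))\in S,\ a_j=\alpha_j\text{ for all }j<k\}.$$
The inductive invariant to carry along is that the set $S_k:=\{a\in S:a_j=\alpha_j\text{ for }j<k\}$ over which the maximum is taken is non-empty: this holds because $\alpha_{k-1}$ was chosen as a digit actually attained by some element of $S_{k-1}$, and that element then lies in $S_k$. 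Each $\alpha_k$ is thus a maximum of a non-empty subset of the finite set $\mathcal{D}=\{0,1,\ldots,9\}$, hence exists. Let $\alpha:=(\alpha_0,(\alpha_n)_{n\geq 1})$, replaced by its canonical representative should it end in a tail of nines.

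Next I would check that $\alpha$ is an upper bound. Given $a\in S$ with $a\neq\alpha$, let $k\geq 0$ be the first position at which $a$ and $\alpha$ differ. Since $a$ agrees with $\alpha$ in all positions $j<k$, we have $a\in S_k$, so by the defining maximality of $\alpha_k$ we get $a_k\leq\alpha_k$; as $k$ is the first disagreement, $a_k\neq\alpha_k$, whence $a_k<\alpha_k$ and therefore $a<\alpha$ in the lexicographic order. To see that $\alpha$ is the \emph{least} upper bound, suppose $\beta<\alpha$ and let $k$ be the first position where $\beta_k<\alpha_k$ (so $\beta_j=\alpha_j$ for $j<k$). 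Because $\alpha_k=\max\{a_k:a\in S_k\}$ is attained, there exists $a\in S_k$ with $a_k=\alpha_k$; this $a$ agrees with $\beta$ in positions $j<k$ and satisfies $a_k=\alpha_k>\beta_k$, so $a>\beta$ and $\beta$ fails to be an upper bound. Hence $\alpha=\sup(S)\in\mathbb{R}^+$.

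The main obstacle I anticipate is not the greedy construction but the bookkeeping forced by the tail-of-nines identification, since the faithfulness of the ``first differing digit'' comparison to the order on the underlying numbers holds only once one works with canonical representatives. The greedy process genuinely can output a tail-of-nines string: for $S=\{0.9,\,0.99,\,0.999,\ldots\}$ it produces $\alpha=0.999\ldots$, which must be identified with $1$, the true supremum. I would therefore either pass to canonical forms throughout (as the footnote's first convention permits) or adopt its alternative of excluding such expansions from the outset and adjoining the boundary cases afterward, and in either case I would have to confirm that the first-disagreement arguments used in both the upper-bound and least-upper-bound steps survive the identification. This is the one point where genuine care is required; the remainder of the argument is the routine verification that a greedy maximization yields an extremum.
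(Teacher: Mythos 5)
Your proposal is correct and follows the same overall strategy as the paper's proof: a greedy digit-by-digit construction of the candidate supremum, followed by separate verifications that it is an upper bound and that it is least. The difference is the selection rule. The paper first disposes of the case where $S$ has a maximum, and then chooses each digit $a_{m}$ as the largest digit for which the truncation $a_0+\sum_{n=1}^{m} a_n/10^n$ is \emph{not an upper bound} of $S$; with that rule, least-ness is immediate (any $\beta<\alpha$ lies strictly below some truncation, which by construction is not an upper bound), while the upper-bound half is where the digit-by-digit comparison with an arbitrary $b\in S$ takes place. You instead choose each digit as the largest digit \emph{attained} by an element of $S$ agreeing with the prefix built so far, carrying the invariant $S_k\neq\emptyset$. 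This buys you two things the paper's version lacks: no case split (when $S$ has a maximum your procedure simply reproduces its digits, as maximality forces $\alpha_k=m_k$ at every stage), and a constructive least-ness step, since attainment hands you an explicit witness $a\in S_k$ with $a>\beta$. The price is exactly the bookkeeping you flagged at the end: as literally written, your least-upper-bound argument compares $\beta$ against the digits of the \emph{canonicalized} $\alpha$, whereas the attainment property holds for the digits of the greedy string; when the greedy output ends in a tail of nines (your $S=\{0.9,\,0.99,\,0.999,\ldots\}$ example) the two disagree in every position, and the argument must instead be run against the greedy string itself, using that a first-disagreement comparison of strings maps monotonically (and strictly, between canonical representatives) onto the order of $\mathbb{R}^+$. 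Since you identified this as the one point requiring care and sketched viable fixes, there is no genuine gap; it is worth noting that the paper's construction can also emit tails of nines (for $S=\{0.4,\,0.49,\,0.499,\ldots\}$ it produces $0.4999\ldots$) and quietly absorbs the same issue into its footnote convention identifying such expansions with finite decimals.
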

\begin{proof}
Let $S\subseteq\mathbb{R}^+$ be non-empty and bounded. We shall show there exists some $\alpha=\left(a_0,(a_n)_{n\geq1}\right)\in\mathbb{R}^+$ which is the supremum of $S$. 

If $S$ has a maximum, there is nothing to prove (that maximum is already the supremum). It what follows, let us then suppose  that $S$ has no maximum, i.e., that there is no element at $S$ being an upper bound of $S$. 

We begin taking $a_0$ as the greatest natural number which is not an upper bound of $S$ (its existence is guarantee by the well-ordering principle of $\mathbb{N}$).
The sequence of digits $(a_n)_{n\geq1}$ is defined recursively as follows:
\begin{itemize}
	\item $a_1$ is the greatest digit $\left(a_1\in\mathcal{D}\right)$ such that $a_0+a_1/10$ is not an upper bound of S,
	\item assuming the first m digits $a_1,\ldots,a_m$ have been defined, $a_{m+1}$ is chosen as the greatest digit $a_{m+1}\in\mathcal{D}$ such that $a_0+\sum_{n=1}^{m+1}\frac{a_n}{{10}^n}$ is not an upper bound of $S$.
\end{itemize}

We shall prove that $\alpha=\left(a_0,(a_n)_{n\geq1}\right)$ is the supremum of $S$.

We begin showing that $\alpha$ is an upper bound of S. To do so, let us take $(b_0,(b_n)_{n\geq1})\in S$ and show that $(b_0,(b_n)_{n\geq1})\le\alpha$. Since $(b_0,(b_n)_{n\geq1})$ is not a maximum of $S$ (recall we are assuming $S$ has no maximum), $b_0$ cannot be an upper bound of $S$, so $b_0\le a_0$ (by definition of $a_0$). If $b_0<a_0$, then $(b_0,(b_n)_{n\geq1})\le(a_0,{(a}_n)_{n\geq1})=\alpha$ and we have done. Otherwise, if $a_0=b_0$, we have that $a_0+b_1/10$ is not an upper bound of $S$ because $S$ has no maximum and $a_0+b_1/10\le(b_0,(b_n)_{n\geq1})$. Hence, by definition of $a_1$, we must have $b_1\le a_1$. Proceeding recursively we arrive at one of the two mutually exclusive possibilities:

\begin{itemize}
	\item $b_n=a_n$ for every $n\in\mathbb{N}$, that is, $(b_0,(b_n)_{n\geq1})=(a_0,(a_n)_{n\geq1})$, or
	\item there exists $m\in\mathbb{N}$ such that $b_n=a_n$ for every $0\le n<m$ and $b_m<a_m$, that is, $(b_0,{b_n}_{n\geq1})<(a_0,{a_n}_{n\geq1})=\alpha$.
	\end{itemize}
	
It remains to show that $\alpha$ is the least upper bound of $S$. Indeed, if $\beta\in\mathbb{R}^+$ is such that $\beta<\alpha$, then there exists $m\in\mathbb{N}$ such that $\beta<a_0+\ \sum_{n=1}^{m}{a_n/{10}^n}$ and the definition of $a_m$ guarantees that $\beta$ cannot be an upper bound of $S$.
\end{proof}

Once the supremum property has been established for $\mathbb{R}^+$, addition and multiplication are defined as follows:

\begin{itemize}
	\item Given two elements $a=(a_0,(a_n)_{n\ge 1})$ and $b=(b_0,(b_n)_{n\ge 1})$ of $\mathbb{R}^+$, their sum is defined by $a+b := \sup_{m\ge 1} \{a_{(m)}+b_{(m)}\}$, where $a_{(m)}$ and $b_{(m)}$ denote the truncations of $a$ and $b$ at the $m$-th decimal place.
	\item Similarly, their product is defined by $a\cdot b := \sup_{m\ge 1}\{a_{(m)}\cdot b_{(m)}\}$.
\end{itemize}

It can be shown that these operations admit additive and multiplicative identities, and satisfy the associative, commutative and distributive laws. Moreover, they are compatible with the order on $\mathbb{R}^+$, and every non-zero decimal expansion has a multiplicative inverse\footnote{It is also possible to establish effective algorithms for the addition and multiplication of decimal expansions, without appealing to the computation of suprema \citep{fardin2021}.}.

The work with elementary magnitudes can also justify the convenience of introducing negative numbers. Indeed, we may want to speak of negative quantities of length in order to measure the distance from a point in a certain direction (for example, to the left of a fixed point on a line). Similarly, we may want to speak of negative quantities of time, to measure the lapse that has passed before a certain predetermined instant. These quantities would be like the positive ones, but with a sign denoting their peculiar interpretation. Thus, for each non-zero $x\in\mathbb{R}^+$ we add an element $x^\ast$ such that $x+x^\ast=0$. If we call $\mathbb{R}$ the set resulting from adding to $\mathbb{R}^+$ the corresponding elements $x^\ast$, and we want to extend addition and multiplication to $\mathbb{R}$ while preserving the associative, commutative, and distributive properties, we obtain the rule of signs, namely:
$x\cdot y^\ast:=(x\cdot y)^\ast,\quad x^\ast\cdot y^\ast:=x\cdot y$. We can also deduce that addition must satisfy
\[
x + y^\ast =
\begin{cases}
	x - y, & \text{if } y < x,\\
	0, & \text{if } y = x,\\
	(y - x)^\ast, & \text{if } y > x,
\end{cases}
\]
for $x, y \in \mathbb{R}^+$, where $y^\ast$ denotes the additive inverse of $y$.

Another option is to add the negative numbers immediately after the natural numbers, and work with both positive and negative numbers from that point onward. Thus, instead of the sequence
\[
\text{REM}_{\text{DM}}(\mathbb{N}) \rightarrow \text{REM}_{\text{DM}}(\mathbb{Q}^+) \rightarrow \text{REM}_{\text{DM}}(\mathbb{R}^+) \rightarrow \text{REM}_{\text{DM}}(\mathbb{R}),
\]
we would have the following sequence:
\[
\text{REM}_{\text{DM}}(\mathbb{N}) \rightarrow \text{REM}_{\text{DM}}(\mathbb{Z}) \rightarrow \text{REM}_{\text{DM}}(\mathbb{Q}) \rightarrow \text{REM}_{\text{DM}}(\mathbb{R}).
\]

In this case, the integers can be formalized in several ways within Set Theory. A common construction is to identify an integer with the equivalence class $\left[(a,b)\right]$ of a pair $(a,b)$ of natural numbers, where this class represents the integer $a-b$, and two pairs $(a,b)$ and $(c,d)$ are equivalent if $a+d = b+c$. In this case, the operations are defined as:
\begin{eqnarray*}
	\left[(a,b)\right] + \left[(c,d)\right] &:=& \left[(a+c, b+d)\right],\\
	\left[(a,b)\right] \cdot \left[(c,d)\right] &:=& \left[(ac+bd, ad+bc)\right].
\end{eqnarray*}

Note that the definition of the product follows from the sign rule, which can be deduced from the desire to extend the addition and multiplication operations to a structure that consistently includes additive inverses. The definition of rational numbers is done similarly to the case of $\mathbb{Z}$. The definition of $\mathbb{R}$ is done as in $\mathbb{R}^+$, but allowing the first component of the ordered pair to be an integer rather than a natural number.

\subsection{Reference didactic ends and means}\label{subsect5.2}

Among the $\text{RDE}_{\text{DM}}(\mathbb{R})$ is the avoidance of the $\text{CD}\varphi_{\text{DM}}(\mathbb{R})$. As we have indicated, these phenomena emanate from an epistemological model, the $\text{CEM}_{\text{DM}}(\mathbb{R})$, which only admits axiomatic definitions and deductive arguments, leaving aside modelling activity and, in particular, work with magnitudes. It is for this reason that we have constructed a $\text{REM}_{\text{DM}}(\mathbb{R})$ that identifies work with numbers as the result of a modelling activity of quantities of magnitude. Moreover, this is compatible with a general didactic paradigm for the study of mathematics which, in other works \citep{gascon2021a, gascon2021b, barquero2013, garcia2006modelling}, we have called \textit{the mathematical modelling paradigm}, and which we postulate will prevent other phenomena related to the general study of mathematics in different didactic institutions. According to this disciplinary paradigm, all mathematical praxeologies can be seen as the result of modelling processes of (extra- or intra-mathematical) systems, whose ultimate purpose is to be able to answer questions about these systems.

In contrast to what occurs with the current epistemological model, which seems compatible only with authoritarian didactic means, the reference epistemological model is, of course, compatible with lectures, but also with student inquiry activities guided by the teacher, for example through the so-called \textit{Study and Research Paths} \citep{barquero2013}. This is possible because the praxeological elements emerge naturally, always motivated by questions arising throughout our study of the comparative systems associated with certain magnitudes. It should be noted that this is only possible if the epistemological component admits non-deductive arguments, which are unavoidable in the construction of a model of a system.

\section{Retrospective analysis: construction of an antecedent didactic paradigm}\label{sec6}

The retrospective analysis arises from the attempt to understand why the study of $\mathbb{R}$ in DM has come to be governed by the $\text{CDP}_{\text{DM}}(\mathbb{R})$. What was the antecedent modality of study that makes it possible to explain the emergence of the current one?

With the aim of attempting to address this question, we shall construct, from the perspective of the $\text{CDP}_{\text{DM}}(\mathbb{R})$, an $\text{ADP}_{\text{DM}}(\mathbb{R})$ that embodies a possible past of the current modality of study of $\mathbb{R}$ in DM. Strictly speaking, if we intend to speak of a possible past modality of study, we should take into account that the DM institution has evolved enormously over time. By a slight abuse of notation, we shall continue denoting by DM the didactic institution that, in the past, was responsible for training prospective researchers in mathematics. And, again strictly speaking, it would not be accurate to speak of the study of real numbers in the institution responsible for the education of mathematicians. Indeed, until the nineteenth century, the mathematical community made merely pragmatic use of the notion of irrational number, without critically examining its precise meaning or nature\footnote{It could be said that the irrational number was, until well into the nineteenth century, a paramathematical object, in the sense introduced by \cite{chevallard1985}.}. Consequently, no specific didactic aims were proposed with respect to the study of real numbers. These numbers were not studied; they were simply employed in problems of infinitesimal calculus \citep{edwards1979}. Thus, the didactic paradigm antecedent to the $\text{CDP}_{\text{DM}}(\mathbb{R})$ is more appropriately described as an $\text{ADP}_{\text{DM}}(\text{MA})$, where the field of knowledge MA refers to mathematical analysis. 

As regards the antecedent epistemological model, $\text{AEM}_{\text{DM}}(\text{MA})$, precisely because the real numbers were not a specific object of study, there were no types of tasks concerning, say, the properties of $(\mathbb{R},\ +,\ \cdot,\ <)$. At the core of the ontological component lay the idea of a real number as the representation of the measure of a certain magnitude of a geometrical nature, such as length or area. Thus, the corresponding epistemological component allowed arguments based on the geometrical interpretation of numbers. However, this type of argument led, within the nomological component, to the emergence of errors, paradoxes, and difficulties in proving results that appeared to be basic and straightforward. Some examples of such errors and paradoxes are: 

\begin{itemize}
	\item For much of the nineteenth century, it was erroneously accepted by the mathematical community that every continuous function was differentiable at all its points, except possibly at isolated ones (some textbooks of this period even purported to prove this claim). In this context, Karl Weierstrass's presentation in 1861 of an example of a function continuous on the entire real line but differentiable at no point caused a considerable commotion\footnote{The example was presented by Weierstrass in his lectures at the University of Berlin in 1861, although it was not published until 1872. In 1834, Bolzano had published an example of such a function, but it was overlooked by the mathematical community \citep{edwards1979}.} \citep{edwards1979}.
	
	\item On the other hand, in an article published in 1869, Charles M\'eray drew attention to a circular reasoning, a \textit{petitio principii}, in which, since the time of Cauchy, the mathematical community had been engaging: the limit of a sequence was defined as a real number, and then a real number was defined as the limit of a sequence of rational numbers \citep{boyer2011}.
	
	\item During the eighteenth century, it was taken for granted that the limit of a sequence of continuous functions was itself a continuous function \citep{lakatos1978a}. This fact was usually justified by a kind of application of a more general principle of continuity, according to which ``\textit{if a variable quantity at all stages enjoys a certain property, its limit will enjoy this same property}'' \citep[Lhuilier, 1787, as cited in][p.~153]{lakatos1978a}. Some authors, such as \citep{kline1990}, refer to this as one of Cauchy's ``errors'' in his \textit{Cours d'analyse} of 1821, when he stated (and ``proved'') that if a series of continuous functions is convergent, then its sum is also a continuous function. This statement (interpreted from the perspective of the current ontological component) is ``false''\footnote{In Section~\ref{sec7}, we will show that this ``error'' may cease to be regarded as such from other perspectives.}: as of at least 1826, a counterexample due to Abel was known; indeed, the series $
	\sum_{n=1}^{\infty}(-1)^{n+1}\frac{\sin(nx)}{n}$
	is discontinuous at $x=(2m+1)\pi$ for each natural $m$ \citep{kline1990}.
	
	These errors, together with the loss of truth status associated with Euclidean geometry triggered by the emergence of non-Euclidean geometries, led the mathematical community to reconsider the epistemological component, seeking not to base proofs involving real numbers on a geometrical interpretation of them. The following reflection by Richard Dedekind is highly revealing in this regard. At the same time, the quotation illustrates the didactic component that permeated the entire process:

	\begin{quote}
			My attention was first directed toward the considerations which form the subject of this pamphlet in the autumn of 1858. As professor in the Polytechnic School in Z\"urich I found myself for the first time obliged to lecture upon the elements of the differential calculus and felt more keenly than ever before the lack of a really scientific foundation for arithmetic. In discussing the notion of the approach of a variable magnitude to a fixed limiting value, and especially in proving the theorem that every magnitude which grows continually, but not beyond all limits, must certainly approach a limiting value, I had recourse to geometric evidence. Even now such resort to geometric intuition in a first presentation of the differential calculus, I regard as exceedingly useful, from the didactic standpoint, and indeed indispensable, if one does not wish to lose too much time. But that this form of introduction into differential calculus can make no claim to being scientific, no one will deny. For myself this feeling of dissatisfaction was so overpowering that I made the fixed resolve to keep meditating on the question till I should find a purely arithmetic and perfectly rigorous foundation for the principles of infinitesimal analysis. \citep[p. 1]{dedekind1963}. 
	\end{quote}

	Thus, we identify as $\text{AD}\varphi_{\text{DM}}(\mathbb{R})$ the errors, paradoxes, circular reasoning, and difficulties in proving intuitively obvious and seemingly basic results, due to the way in which the real numbers (and other associated notions, such as function, continuity, and differentiability) were conceived in geometrical terms.
	
	In response to the $\text{ADP}_{\text{DM}}(\mathbb{R})$, and in order to avoid the $\text{AD}\varphi_{\text{DM}}(\mathbb{R})$, the mathematical community constructed the current didactic paradigm, $\text{CDP}_{\text{DM}}(\mathbb{R})$. This paradigm was precisely the one built during the process known as\textit{ the arithmetization of analysis}. The endeavour to base analysis on elementary arithmetic, and in particular to construct $\mathbb{R}$ from the natural numbers, began in the first quarter of the nineteenth century, extended throughout the century, and was conditioned by the need to meet the new criteria of the epistemological component. The culmination of this reconstruction entailed significant changes in the ontological component with regard to the notions of real number, function, limit, continuity, and derivative, and, in the nomological component, with the emergence of new proofs of basic theorems of mathematical analysis (such as, for example, the intermediate value theorem) without appealing to the intuitive ideas of geometry \citep{kline1990}. The change in the ontological component reached its peak in 1872, the year in which four constructions of the real numbers were published simultaneously (in purely arithmetic terms), those of Dedekind, Cantor, M\'eray, and Heine \citep{edwards1979}.

\end{itemize}

\section{Formulating new didactic problems}\label{sec7}

As we have shown, the didactic analysis of a modality of study depends on the reference didactic paradigm that the researcher establishes from the outset: initially implicitly, and subsequently explicitly. It is clear that, after having completed a first didactic analysis of the current modality of study of a certain field $\mathcal{F}$ in an institution I, new didactic phenomena may emerge that had not previously been detected, thus requiring the initiation of a new investigation. This means that research will construct a new ${\text{REM}}_{I}^\prime\left(\mathcal{F}\right)$, associated with the new phenomena and providing a different conceptualisation of $\mathcal{F}$, as well as a new ${\text{RDP}}_{I}^\prime\left(\mathcal{F}\right)$. In this way, new problems of didactic research will be formulated.

In relation to the modality of study of $\mathbb{R}$ in DM, we can formulate at least two new didactic problems concerning the field of the hyperreal numbers, ${}^\ast\mathbb{R}$:

\begin{itemize}
	\item Would the construction of a ${\text{RDP}}_{\text{DM}}^\prime({}^\ast\mathbb{R})$ provide a different explanation of the antecedent phenomena? Would it bring to light other phenomena currently present in DM?
	
	\item What institutional constraints have prevented the integration of nonstandard analysis and, in particular, the construction of the hyperreal numbers, $^*\mathbb{R}$, into DM?
\end{itemize}

We shall limit ourselves to developing a few ideas regarding the first of these problems. In the description of $\text{ADP}_{\text{DM}}(\mathbb{R})$ that we have provided in Section~\ref{sec6}, we analysed the didactic phenomena that the mathematical community perceived in the past and that justified the paradigmatic shift culminating in the process known as the arithmetization of analysis. It is important to emphasize that our construction of $\text{ADP}_{\text{DM}}(\mathbb{R})$  focuses essentially on the stage immediately preceding the arithmetization of analysis and that, as we have clarified, it has been carried out from the perspective afforded by $\text{CDP}_{\text{DM}}(\mathbb{R})$ and, ultimately, by $\text{RDP}_{\text{DM}}(\mathbb{R})$. In particular, we have worked with the description of a set of real numbers satisfying the Archimedean property. However, in the stages prior to the arithmetization of analysis, the mathematical community also worked with a kind of non-Archimedean field of real numbers, which included a proto-model of the real numbers (the so-called standard real numbers), as well as an infinity of \textit{nonstandard numbers} (\textit{infinitesimals}, \textit{infinitely large numbers}, and \textit{numbers infinitely close to the standard real numbers}). In fact, one may understand that, prior to the culmination of the arithmetization of analysis, there were two ``rival'' models of the real numbers: \textit{Leibniz's theory} of non-Archimedean real numbers and \textit{Weierstrass's theory} of Archimedean real numbers \citep{lakatos1978a}.

The construction of a ${\text{RDP}}_{\text{DM}}^\prime({}^\ast\mathbb{R})$, base on \citeauthor{robinson1966}'s (\citeyear{robinson1966}) hyperreal numbers, could indeed shed light on a different explanation of the antecedent phenomena (if not all of them, then at least of some). For example, taking as a reference a didactic paradigm grounded in ${}^\ast\mathbb{R}$ could radically alter the explanation of the ``Cauchy error'' to which we referred in Section~\ref{sec6}. Indeed, Lakatos draws on \citeauthor{robinson1966}'s (\citeyear{robinson1966}) perspective to present an alternative \textit{``rational reconstruction''} of the ontological component of Cauchy's work, under which that \textit{``famous error''} ceases to be one\footnote{In \citep{laugwitz1987}, drawing on a different point of reference, yet another alternative ``rational reconstruction'' of that same ``Cauchy error'' is offered.}: \textit{``Cauchy made absolutely no mistake, he only proved a completely different theorem, about transfinite sequences or functions which Cauchy-converge on the Leibniz continuum''} \citep[p. 153]{lakatos1978a}.

\section{Conclusions: the ecology of didactic paradigm shifts}\label{sec8}

We may conclude that the main objective of this work (namely, to begin addressing the questions through which we formulated the research problem) has been reasonably achieved. Indeed:  

We have constructed a $\text{RDP}_{\text{DM}}(\mathbb{R})$ based on an alternative $\text{REM}_{\text{DM}}(\mathbb{R})$, in contrast to the $\text{CEM}_{\text{DM}}(\mathbb{R})$. The representation of $\text{CDP}_{\text{DM}}(\mathbb{R})$ obtained from the perspective of this $\text{RDP}_{\text{DM}}(\mathbb{R})$ accounts for the fact that the $\text{CD}\varphi_{\text{DM}}(\mathbb{R})$ occur as an unforeseen and unintended consequence of implementing the $\text{CDP}_{\text{DM}}(\mathbb{R})$. We posit that the modality of study modelled by $\text{RDP}_{\text{DM}}(\mathbb{R})$ would allow these $\text{CD}\varphi_{\text{DM}}(\mathbb{R})$ to be avoided, since $\text{REM}_{\text{DM}}(\mathbb{R})$ presents numbers and their operations (from the natural numbers up to the real numbers) as the result of a modelling activity necessary to address certain questions arising from working with particular magnitudes. This yields a construction of $\mathbb{R}$ that provides an explicit  raison d'\^etre for the extension of the rational numbers.

Regarding the ecological dimension, and in the absence of empirical verification, the transition from $\text{CEM}_{\text{DM}}(\mathbb{R})$ to $\text{REM}_{\text{DM}}(\mathbb{R})$ appears relatively straightforward, since the connection it proposes between $\mathbb{R}$ and the measurement of magnitudes is compatible with the current deductivist epistemological component of $\text{CEM}_{\text{DM}}(\mathbb{R})$. However, the transition from $\text{CDP}_{\text{DM}}(\mathbb{R})$\ to\ $\text{RDP}_{\text{DM}}(\mathbb{R})$, considered as a whole, would require prioritizing the problematic questions related to the measurement of magnitudes, which would entail a profound change in the modality of study (beyond the real numbers) and would impose significant institutional constraints.

Through a retrospective analysis, we have described the conditions that presumably gave rise to, and continue to sustain, the current modality of study of $\mathbb{R}$ in DM. More precisely, we have shown that the historical emergence, from the late nineteenth century, and the current persistence of the $\text{CDP}_{\text{DM}}(\mathbb{R})$ can be interpreted as a way of avoiding certain antecedent didactic phenomena, which manifested as errors, inaccuracies, and paradoxes and were therefore undesirable from the perspective of the $\text{CDP}_{\text{DM}}(\mathbb{R})$. 

Consequently, we have interpreted the current modality of study of $\mathbb{R}$ in DM from the dual perspective provided by the antecedent didactic paradigm (which explains it) and the reference didactic paradigm (which offers an alternative).

In summary, we have conducted a didactic analysis that integrates, in a coherent manner, the examination of both a hypothetical past and a prospective future of the current modality of study. Both the antecedent didactic paradigm and the reference paradigm constitute scientific hypotheses (theoretical constructs) developed to interpret possible dynamics of the current didactic paradigm.

Another fundamental contribution of this work lies in the implementation, in a concrete case, of a new general methodology for the didactic investigation of the modality of study of a particular knowledge domain, either current or potential, in a given institution. In this way, we inaugurate a methodology that links the ecology of didactic paradigm shifts, which account for the modalities of study, with the avoidance and the implementation of certain didactic phenomena (in the broad sense of study-related phenomena) that constitute the starting point of the investigation.

\bmhead{Acknowledgements}

This work has been funded by the projects PID2021-126717NB-C31, PID2021-126717NB-C32 and PID2021-126717NB-C33. Part of the research developed for this communication has been carried out during a stay of the first named author at the Centre de Recerca Matem\`atica (CRM) of Barcelona (Spain) in June 2024.


\bibliography{bibliographyRealNumbers}

\end{document}